\def\nb{{\Bbb N}}
\def\rb{{\Bbb R}}
\newtheorem{theorem}{Theorem} 
\newtheorem{proposition}{Proposition}
\begin{document}
\title[On the zeros of Dirichlet $L$-functions]{On the zeros of Dirichlet $L$-functions} 

\author{Sami Omar, Raouf Ouni, Kamel Mazhouda}
\address{Faculty of Science of Tunis, Department of Mathematics, 2092 Tunis, Tunisia}
\email{sami.omar@fst.rnu.tn}
\address{Faculty of Science of Tunis, Department of Mathematics, 2092 Tunis, Tunisia}
\email{raouf.ouni@fst.rnu.tn}
\address{Faculty of Science of Monastir, Department of Mathematics, 5000 Monastir, Tunisia}
\email{kamel.mazhouda@fsm.rnu.tn}


\thanks{2000 Mathematics Subject Classification: 11M06,11M26, 11M36.\\Keywords and Phrases: Dirichlet $L$-functions, Li's criterion, Riemann hypothesis.}

\maketitle

\begin{abstract}
In this paper, we compute and verify the positivity of  the Li coefficients for the Dirichlet $L$-functions  using an arithmetic formula  established in
Omar and Mazhouda, J. Number Theory 125 (2007) no.1, 50-58;  J. Number Theory 130 (2010) no.4, 1109-1114. Furthermore, we formulate a criterion for the partial
Riemann hypothesis and we provide some numerical evidence for it using new formulas for the Li coefficients.

\end{abstract}

\begin{section}{Introduction}\label{sec.1}
The Li criterion  for the Riemann hypothesis (see. \cite{6}) is a necessary and sufficient condition that the sequence $$\lambda_{n}=\sum_{\rho}\left[1-\left(1-\frac{1}{\rho}\right)^{n}\right]$$ is non-negative for all $n\in{\mathbb {N}}$ and where $\rho$ runs over the non-trivial zeros of $\zeta(s)$. This criterion holds also for the Dirichlet $L$-functions and for a large class of Dirichlet series, the so called the  Selberg class as given in \cite{10}. More recently, Omar and Bouanani \cite{9} extended the Li criterion for function fields and established an explicit and asymptotic formula for the Li coefficients.\\

Numerical computation of the first 100 of the Li coefficients $\lambda_{n}$ which appear in this criterion was made by Maslanka \cite{8}  and later by Coffey in \cite{2},  who computed and verified  the positivity of about 3300 of the Li coefficient $\lambda_{n}$. The main empirical observation made by Maslanka is that these coefficients can be separated in two parts, where one of them grows smoothly while the other one is very
small and oscillatory. This apparent smallness is quite unexpected. If it persisted until infinity then
the Riemann hypothesis would be true. As we said above, this criterion was  extended to a large class of Dirichlet series \cite{10} and no calculation or verification of the positivity to date in the literature made for other $L$-functions.\\

In this paper, we compute and verify the positivity of  the Li coefficients for the Dirichlet $L$-functions  using an arithmetic formula  established in \cite{10,11}. Furthermore, we formulate a criterion for the partial
Riemann hypothesis. Additional results are presented, including  new formulas for the Li coefficients. Basing on the numerical computations made below, we conjecture that these coefficients are increasing in $n$.  Should this conjecture hold, the validity of the Riemann hypothesis would follow.\\

Next, we review the Li criterion for the case of the Dirichlet $L$-functions. Let $\chi$ be a primitive Dirichlet character of conductor $q$. The Dirichlet $L$-function
attached to this character is defined by
$$L(s, \chi)=\sum_{n=1}^{\infty}\frac{\chi(n)}{n^{s}}, \ \ \ \ (Re(s) > 1).$$
For the trivial character $\chi = 1$, $L(s, \chi)$ is the Riemann zeta function. It is well
known \cite{3} that if $\chi\neq1$ then $L(s,\chi)$ can be extended to an entire function in
the whole complex plane and satisfies the functional equation
$$\xi(s,\chi)= \omega_{\chi}\xi(1-s,\overline{\chi}),$$
where $$\xi(s,\chi)=\left(\frac{q}{\pi}\right)^{(s+a)/2}\Gamma\left(\frac{s+a}{2}\right)L(s,\chi),$$
$$a=\left\{\begin{array}{crll}0&\hbox{if}&\chi(-1)= 1\\1&\hbox{if}&\chi(-1) =-1,\end{array}\right.\ \ \ \ \hbox{and}\ \ \ \  \omega_{\chi}=\frac{\tau(\chi)}{\sqrt{q}i^{a}},$$
where $\tau(\chi)$ is the Gauss sum $$\tau(\chi)=\sum_{m=1}^{q}\chi(m)e^{2\pi im/q}.$$ The function $\xi(s,\chi)$ is an entire function of order one. The function $\xi(s,\chi)$ has  a product representation
\begin{equation}\label{eq.1}\xi(s,\chi) =\xi(0,\chi)\prod_{\rho}\left(1-\frac{s}{\rho}\right),\end{equation}
where the product is over all the zeros of $\xi(s,\chi)$ in the order given by $|Im(\rho)|<T$ for $T\rightarrow\infty$.
If $N_{\chi} (T)$ counts the number of zeros of $L(s,\chi)$ in the rectangle   $0\leq Re(s)\leq1$, $0<Im(s)\leq T$ (according to multiplicities) one can show by standard contour integration the formula
$$N_{\chi}(T)=\frac{1}{2\pi}T\log T+c_{1}T+O\left(\log T\right),$$
where
$$c_{1}=\frac{1}{2\pi}\left(\log q-\left(\log(2\pi)+1\right)\right).$$
 We put
$$\lambda_{\chi}(n)=\sum_{\rho}\left[1-\left(1-\frac{1}{\rho}\right)^{n}\right],$$
where the sum over $\rho$ is $\sum_{\rho}=\lim_{T\mapsto\infty}\sum_{|Im\rho|\leq T}$.\\
{\bf Li's criterion} says that  $\lambda_{\chi}(n)> 0$ for all $n = 1, 2, .$ . . if and only if all of the zeros of $\xi(s,\chi)$ are located on the critical line $Re(s) = 1/2$.\\

The paper is organized as follows. In Section \ref{sec.2},  we recall the arithmetic formula for the Li coefficients for the Dirichlet $L$-functions and  we give an estimate for the error term of $\lambda_{\chi}(n)$. In Section \ref{sec.3}, we show that $\lambda_{\chi}(n)\geq0$ if every non-trivial zero of $L(s,\chi)$ with $|Im(\rho)|<\sqrt{n}$ satisfies  $Re(\rho)=1/2$ (that is partial Riemann hypothesis) and we give  an estimate for the difference $|\lambda_{\chi}(n)-\lambda_{\chi}(n,T)|$, where $\lambda_{\chi}(n,T)$ are the partial Li coefficients. In Section \ref{sec.4}, we prove new formulas (integral and summation formula) for the Li coefficients $\lambda_{\chi}(n)$. Finally, in Section \ref{sec.5}, we report numerical computations of the Li coefficients using different formulas established in the previous sections unconditionally or under the Riemann hypothesis.
\end{section}
\section{Li's coefficients}\label{sec.2}

Applying \cite[Theorem 2.2]{10} for the case of the Dirichlet $L$-functions, we get the following arithmetic formula.
\begin{theorem}\label{th.1}Let $\chi$ be a primitive Dirichlet character of conductor $q>1$. We have
\begin{eqnarray}\label{eq.2}
\lambda_{\chi}(n)&=&-\sum_{j=1}^{n}(_{j}^{n})\frac{(-1)^{j-1}}{(j-1)!}\sum_{k=1}^{+\infty}\frac{\Lambda(k)}{k}\chi(k)(\log k)^{j-1}\nonumber\\
&&\ \ +\ \ \frac{n}{2}\left(\log\frac{q}{\pi}-\gamma\right)+\tau_{\chi}(n)
\end{eqnarray}
where
$$\tau_{\chi}(n)=\left\{\begin{array}{crll}\sum_{j=2}^{n}(_{j}^{n})(-1)^{j}\left(1-\frac{1}{2^{j}}\right)\zeta(j)-\frac{n}{2}\sum_{l=1}^{+\infty}\frac{1}{l(2l-1)}&\hbox{if}&\chi(-1)=1,\\ \sum_{j=2}^{n}(_{j}^{n})(-1)^{j}2^{-j}\zeta(j)&\hbox{if}&\chi(-1)=-1.\end{array}\right.$$
\end{theorem}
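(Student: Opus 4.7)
The plan is to specialize Theorem~2.2 of \cite{10} to the case $F(s)=L(s,\chi)$. A primitive Dirichlet $L$-function sits inside the Selberg class with degree $d=1$, conductor $q$, single Gamma factor $\Gamma((s+a)/2)$ (so $\lambda=1/2$ and shift $\mu=a/2$), Dirichlet coefficients $\chi(n)$, and $Q=\sqrt{q/\pi}$. The general formula decomposes $\lambda_\chi(n)$ as three pieces: an arithmetic part built from the Dirichlet expansion of $-F'/F$, an archimedean part coming from $\log\Gamma$ of the Gamma factors, and a linear-in-$n$ term from $\log Q$. Most of the work is bookkeeping of these three pieces.

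First I would handle the arithmetic contribution. Writing $-L'/L(s,\chi)=\sum_{k\geq 1}\Lambda(k)\chi(k)k^{-s}$ in the half-plane of absolute convergence, the $(j-1)$-th derivative at $s=1$ equals $(-1)^{j-1}\sum_k \Lambda(k)\chi(k)(\log k)^{j-1}/k$. The Selberg-class formula in \cite{10} expresses the arithmetic part as a binomial sum of these derivatives weighted by $(-1)^{j-1}/(j-1)!$, which upon substitution recovers the main double sum of \eqref{eq.2}. The convergence of the inner $k$-sum at $s=1$ is the subtle point, since absolute convergence of $L'/L$ fails there; this is handled either by partial summation combined with the non-vanishing of $L(s,\chi)$ on $\mathrm{Re}(s)=1$, or, more cleanly, by the contour-integral derivation of the Li formula which only manipulates $L'/L$ inside the region of absolute convergence before shifting.

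Next comes the archimedean contribution, which is the source of $\tfrac{n}{2}(\log(q/\pi)-\gamma)$ and of $\tau_\chi(n)$. The $\log(q/\pi)$ part comes directly from $n\log Q=\tfrac{n}{2}\log(q/\pi)$. For the Gamma factor the main tool is the Taylor expansion
\[
\log\Gamma(1+z) = -\gamma z + \sum_{k\geq 2}(-1)^k\frac{\zeta(k)}{k}z^k,
\]
together with the change of variable $z=1-1/s$ that defines $\lambda_\chi(n)$. The expansion naturally splits on the parity of $\chi$: for $\chi(-1)=-1$ (so $a=1$) the factor $\Gamma((s+1)/2)$ is regular at $s=0$ and a direct expansion produces $-\tfrac{n}{2}\gamma$ together with $\sum_{j\geq 2}\binom{n}{j}(-1)^j 2^{-j}\zeta(j)$; for $\chi(-1)=1$ (so $a=0$) the pole of $\Gamma(s/2)$ at $s=0$ forces the regularization $\Gamma(s/2)=(2/s)\Gamma(1+s/2)$. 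The extra $1/s$ contributes the correction $-\tfrac{n}{2}\sum_{l\geq 1}1/(l(2l-1))$, while an elementary rearrangement replaces $\zeta(j)$ by $(1-2^{-j})\zeta(j)$ in the main archimedean sum.

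The main obstacle is the careful bookkeeping: keeping the binomial weights, signs, and factors of $2$ consistent across the substitution $z=1-1/s$, and certifying the convergence of all rearrangements so that the formally divergent-looking sums make sense. Rigor demands the contour-integral setup of Theorem~2.2 of \cite{10} rather than naïve term-by-term manipulation of $-L'/L$ at $s=1$. Once that machinery is accepted, both parity cases fall out of a single computation controlled by the parameter $a$.
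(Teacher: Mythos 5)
Your proposal follows the same route as the paper, which simply invokes Theorem~2.2 of \cite{10} for the Selberg class and specializes it to $L(s,\chi)$; the paper gives no further proof beyond this citation (noting the result is also in \cite{2} and \cite{7}), so your detailed bookkeeping of the arithmetic part, the $\log Q$ term, and the two parity cases of the Gamma factor is a correct and fuller account of exactly that specialization.
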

Theorem \ref{th.1} is also proved by Coffey in \cite{2} and Li in \cite{7}.
The arithmetic formula above can be written as
\begin{eqnarray}\label{eq.3}
\lambda_{\chi}(n)&=&\left[\log\frac{q}{\pi}+\psi\left(\frac{a+1}{2}\right)\right]\frac{n}{2}+\sum_{j=2}^{n}(_{j}^{n})\frac{1}{(j-1)!}2^{-j}\psi^{(j-1)}\left(\frac{a+1}{2}\right)\nonumber\\
&&-\ \sum_{j=1}^{n}(_{j}^{n})\frac{(-1)^{j-1}}{(j-1)!}\sum_{k=1}^{+\infty}\frac{\Lambda(k)\chi(k)}{k}(\log k)^{j-1},\nonumber
\end{eqnarray}
 where $a=0$ if $\chi(-1)=1$ and 1 if $\chi(-1)=-1$, with  $\psi(\frac{1}{2})=-\gamma-2\log2$, $\psi(1)=-\gamma$,   \ $\gamma$ is the Euler constant, $\psi^{(j-1)}(1)=(-1)^{j}(j-1)!\zeta(j)$\ \  and $\psi^{(j-1)}(\frac{1}{2})=(-1)^{j+1}j!(2^{j+1}-1)\zeta(j+1).$ Here, $\psi=\frac{\Gamma'}{\Gamma}$ denotes the digamma function.\\

An asymptotic formula for the number $\lambda_{\chi}(n)$ was proved in \cite[Theorem 3.1]{12} using the arithmetic formula. Furthermore, it  is equivalent to the Riemann hypothesis.
\begin{theorem}\label{th.2} We have
$$RH\Leftrightarrow\lambda_{\chi}(n)=\frac{1}{2}n\log n+c_{\chi}n+O(\sqrt{n}\log n),$$
where $$c_{\chi}=\frac{1}{2}(\gamma-1)+\frac{1}{2}\log(q/\pi),$$
and $\gamma$ is the Euler constant.
\end{theorem}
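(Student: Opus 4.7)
The plan is to prove the two implications separately, using the arithmetic formula \eqref{eq.2} from Theorem~\ref{th.1} as the central tool. Decompose $\lambda_\chi(n)=A_\chi(n)+B_\chi(n)$, where
$$A_\chi(n):=\frac{n}{2}\bigl(\log(q/\pi)-\gamma\bigr)+\tau_\chi(n)$$
collects the archimedean/polar contribution and
$$B_\chi(n):=-\sum_{j=1}^{n}\binom{n}{j}\frac{(-1)^{j-1}}{(j-1)!}\sum_{k\geq 1}\frac{\Lambda(k)\chi(k)(\log k)^{j-1}}{k}$$
is the Dirichlet-series piece. First, unconditionally, I would show that $A_\chi(n)=\frac{1}{2}n\log n+c_\chi n+O(\log n)$. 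Writing $\zeta(j)=\sum_{\ell\geq 1}\ell^{-j}$ inside $\tau_\chi(n)$ and swapping the order of summation reduces this to evaluating $\sum_{j=1}^{n}\binom{n}{j}\frac{(-1)^{j-1}}{j}\ell^{1-j}$ for each $\ell$, which collapses via the binomial theorem to elementary quantities; the leading $\frac{1}{2}n\log n$ then comes from the classical identity $\sum_{j=1}^{n}\binom{n}{j}\frac{(-1)^{j-1}}{j}=H_n=\log n+\gamma+O(1/n)$, with the remaining terms assembling into $c_\chi n=\frac{1}{2}\bigl(\gamma-1+\log(q/\pi)\bigr)n$.

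For the forward implication (RH $\Rightarrow$ asymptotic) it then suffices to prove $B_\chi(n)=O(\sqrt{n}\log n)$ under RH for $L(s,\chi)$. Setting $K_n(x):=\sum_{j=1}^{n}\binom{n}{j}\frac{(-1)^{j-1}(\log x)^{j-1}}{(j-1)!}$, partial summation rewrites $B_\chi(n)$ as $\int_{1}^{\infty}\psi(x,\chi)K_n'(x)\,dx$, where $\psi(x,\chi)=\sum_{k\leq x}\Lambda(k)\chi(k)$ satisfies the RH-bound $\psi(x,\chi)\ll\sqrt{x}\log^{2}(qx)$. The kernel $K_n$ is, up to elementary factors, the pullback of $u\mapsto 1-(1-1/u)^n$ under $u\mapsto\log u$; its mass is concentrated near $\log x\asymp\log n$ with width of order $\sqrt{n}$, and a Laplace-type evaluation of the integral, combined with the square-root cancellation from RH, yields the target bound $O(\sqrt{n}\log n)$.

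For the converse, suppose the asymptotic holds but some non-trivial zero $\rho_0$ of $L(s,\chi)$ has $\mathrm{Re}(\rho_0)\neq 1/2$. By the functional equation one may select such a zero with $\mathrm{Re}(\rho_0)<1/2$, so that $|1-1/\rho_0|>1$. Following the dominant-zero strategy of Bombieri--Lagarias, set $r:=\sup_{\rho}|1-1/\rho|>1$; the finitely many zeros realising this supremum contribute a quasi-periodic term of amplitude $\asymp r^n$ to the defining sum $\sum_{\rho}[1-(1-1/\rho)^n]$, while every other zero contributes $O(r_1^{n})$ for some $r_1<r$. Since $r^n$ eventually dominates $\sqrt{n}\log n$, this contradicts the hypothesis, forcing $r=1$ and thus $\mathrm{Re}(\rho)=1/2$ for every non-trivial zero. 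The principal obstacle in the whole scheme is the sharp $O(\sqrt{n}\log n)$ estimate for $B_\chi(n)$: it demands that the RH-strength square-root cancellation interact cleanly with the saddle-point/width analysis of $K_n$, whereas the remaining ingredients (binomial--harmonic identities for $A_\chi$, and the dominant-zero argument) are essentially standard once the arithmetic formula is in hand.
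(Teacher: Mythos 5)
First, note that the paper does not actually prove Theorem~\ref{th.2}: it is quoted from \cite[Theorem 3.1]{12}, where the forward direction is obtained by working directly with the sum over zeros --- under RH one writes $1-1/\rho$ for $\rho=\tfrac12+i\gamma$ as a point on the unit circle, $(1-1/\rho)^n=e^{in\theta_\gamma}$ with $\theta_\gamma\sim 1/\gamma$, and evaluates $\sum_\gamma\bigl(1-\cos(n\theta_\gamma)\bigr)$ by Stieltjes integration against $N_\chi(T)=\frac{T}{2\pi}\log T+c_1T+O(\log T)$; the $O(\sqrt n\log n)$ error comes from the $O(\log T)$ term and the transition range of $\gamma$. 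Your overall architecture (unconditional archimedean asymptotic, RH-conditional bound on the prime sum, dominant-zero converse) is coherent, and your converse is essentially the correct and standard Bombieri--Lagarias argument: if some zero has $\mathrm{Re}(\rho)<1/2$ then $|1-1/\rho|=r>1$, only finitely many zeros lie outside any annulus $|1-1/\rho|\le 1+\varepsilon$, and the resulting oscillation of amplitude $\gg r_1^{\,n}$ is incompatible with $\lambda_\chi(n)=O(n\log n)$. The unconditional treatment of $A_\chi(n)$ via $\zeta(j)=\sum_\ell\ell^{-j}$ and the binomial/harmonic-number collapse is also fine in outline.

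The genuine gap is in your forward direction, i.e.\ the claim $B_\chi(n)=O(\sqrt n\log n)$ under RH. Your kernel is (up to sign) the associated Laguerre polynomial $K_n(x)=L_{n-1}^{1}(\log x)$ already identified in Section~\ref{sec.2} of the paper, and its ``mass'' is \emph{not} concentrated near $\log x\asymp\log n$: the oscillatory range of $L_{n-1}^{1}(u)$ is $0\le u\le 4n$, so the sum over $k$ receives contributions out to $k$ of size $e^{4n}$. Worse, the pointwise size of the kernel there is governed by $|L_{n-1}^{1}(u)|\le e^{u/2}\sqrt{(n-1)/u}$, and the factor $e^{(\log x)/2}=\sqrt{x}$ exactly cancels the square-root saving from the RH bound $\psi(x,\chi)\ll\sqrt{x}\log^2(qx)$: after partial summation one is left with an integral of the shape
\begin{equation*}
n\int_{2}^{e^{4n}}\frac{\log x}{x}\,dx,
\end{equation*}
which is of order $n^3$, not $\sqrt n\log n$. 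So square-root cancellation from RH plus absolute values on the kernel cannot give the theorem; one would have to exploit the oscillation of the Laguerre polynomial itself (Fej\'er/Plancherel--Rotach asymptotics $L_{n-1}^{1}(u)\sim c\,e^{u/2}u^{-3/4}n^{1/4}\cos(2\sqrt{nu}-3\pi/4)$ together with a stationary-phase analysis against $d\psi(x,\chi)$), which you have not carried out and which is substantially harder than the route through the zeros. As written, the ``Laplace-type evaluation'' step is not merely incomplete --- the localization heuristic it rests on is false --- so you should either supply the oscillatory-integral argument or switch the forward direction to the zero-sum method of \cite{12}.
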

Here, we estimate the error term for $\lambda_{\chi}(n)$ using the arithmetic formula (\ref{eq.2}).
Writing (\ref{eq.2}) in the form
$$\lambda_{\chi}(n)=\tilde{\lambda}_{\chi}(n,M)+E_{M},$$
where
\begin{equation}\label{eq.4}\tilde{\lambda}_{\chi}(n,M)=\frac{n}{2}\left(\log\frac{q}{\pi}-\gamma\right)+\tau_{\chi}(n)-\sum_{j=1}^{n}(_{j}^{n})\frac{(-1)^{j-1}}{(j-1)!}\sum_{k\leq M}\frac{\Lambda(k)}{k}\chi(k)(\log k)^{j-1}\end{equation}
and $$E_{M}=-\sum_{j=1}^{n}(_{j}^{n})\frac{(-1)^{j-1}}{(j-1)!}\sum_{k>M}\frac{\Lambda(k)}{k}\chi(k)(\log k)^{j-1}=-\sum_{k>M}\frac{\Lambda(k)}{k}\chi(k)L_{n-1}^{1}(\log k),$$
where $L_{n-1}^{1}$ is an associated Laguerre polynomial of degree $n-1$.\\

Next, for our computation in the section \ref{sec.5}, we need to find $M$ such that $|E_{M}|\leq 10^{-\nu}$. An  estimate for the Laguerre polynomials is due to Koepf and Schmersau \cite[Theorem 2]{5}. Actually,  they have shown that
 $$|L_{n}^{\alpha}(x)|<e^{x/2}\left[(n+\alpha)/x\right]^{\alpha/2}$$
 for $x\in{[0,4(n+\alpha)]}$ when $n+\alpha>0$ and $\alpha$ is an integer. Then, we obtain
   \begin{eqnarray}\label{eq.33}|E_{M}|&\leq&\left|\sqrt{\frac{n}{\log M}}\sum_{m>M}^{+\infty}\frac{\Lambda(m)}{\sqrt{m}}\chi(m)\right|\nonumber\\
 &\leq&\sqrt{\frac{n}{\log M}}\left\{\left|\sum_{p>M}^{+\infty}\frac{\log p}{\sqrt{p}}\chi(p)\right|+\left|\sum_{p^{2}>M}^{+\infty}\frac{\log p}{p}\chi(p^{2})\right|\right\}\nonumber\\
 &&+\ \ \sqrt{\frac{n}{\log M}}\left|\sum_{p^{j}>M,j>2}^{+\infty}\frac{\log p}{p^{j/2}}\chi(p^{j})\right|.
 \end{eqnarray}
 We have  $|\chi(p^{j})|<1$. Therefore,
 $$\left|\sum_{p>M}^{+\infty}\frac{\log p}{\sqrt{p}}\chi(p)\right|+\left|\sum_{p^{2}>M}^{+\infty}\frac{\log p}{p}\chi(p^{2})\right|\leq\left|\sum_{p>M}^{+\infty}\frac{\log p}{\sqrt{p}}\chi(p)\right|+\left|\sum_{p>\sqrt{M}}^{+\infty}\frac{\log p}{p}\chi(p)\right|\leq\frac{2}{\sqrt{M}}$$
 and
$$
\sum_{p^{j}>M,j>2}^{+\infty}\frac{\log p}{p^{j/2}}\leq\left\{\begin{array}{crll} &\frac{\log M}{\sqrt{M}}&    \hbox{if}\ \ M+1 \hbox{ is prime,}\\ &\frac{1}{\sqrt{M}}&   \hbox{otherwise.}\end{array}\right.
$$
Then
\begin{equation}\left\{\begin{array}{crll}|E_{M}|&\leq&\sqrt{\frac{n}{\log M}}\ \frac{\log M+2}{\sqrt{M}}\leq \sqrt{n}\ \frac{\log M+2}{\sqrt{M}}&  \hbox{if}\ M+1 \hbox{ is prime,}\\ |E_{M}|&\leq&\sqrt{\frac{n}{\log M}}\frac{3}{\sqrt{M}}\leq3\frac{\sqrt{n}}{\sqrt{M}}&\ \hbox{otherwise.}\end{array}\right.
\end{equation}
Let $M$ be such that $|E_{M}|\leq 10^{-\nu}$. Then,
 using the theory of the Lambert $W$ function, we choose  $M$
\begin{equation}
\left\{\begin{array}{crll}M&=&\frac{n}{4}\ \left[W_{-1}\left(-\frac{10^{-\nu}}{\sqrt{n}}\right)\right]^{2}+4n\ 10^{2\nu}& \hbox{if}\ M+1\ \hbox{ is prime,}\\ M&=&9n\ 10^{2\nu}&\hbox{otherwise,}\end{array}\right.
\end{equation}
where $W_{-1}$ denotes the branch satisfying $W(x)\leq-1$ and $W(x)$ is the Lambert $W$ function,
which is defined to be the multivalued inverse of the function  $w\longmapsto we^{w}$.
 \begin{section}{Partial Li criterion}\label{sec.3}
 In the following proposition, we  propose a partial Li criterion which relates the partial Riemann hypothesis to the positivity of the Li coefficients up to a certain order.
\begin{proposition}\label{prop.1} For sufficiently large $T\geq T_{1}\geq1$, if every non-trivial zero $\rho$ of $L(s,\chi)$  with $|Im(\rho)|<T$ satisfies $Re(\rho)=1/2$, then $\lambda_{\chi}(n)\geq0$ for all $n\leq T^{2}$.
\end{proposition}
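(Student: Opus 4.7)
The plan is to use the arithmetic formula (\ref{eq.2}) together with a partial-RH bound on the Chebyshev sum $\psi(x,\chi)=\sum_{k\le x}\Lambda(k)\chi(k)$. Write
$$
\lambda_\chi(n)=M(n)+P_\chi(n),
$$
where the ``smooth part''
$$
M(n)=\frac{n}{2}\left(\log\frac{q}{\pi}-\gamma\right)+\tau_\chi(n)
$$
comes from the first two terms of (\ref{eq.2}), and the ``prime part''
$$
P_\chi(n)=-\sum_{k=1}^\infty\frac{\Lambda(k)\chi(k)}{k}\,L_{n-1}^{1}(\log k)
$$
is the Laguerre-weighted prime sum appearing in (\ref{eq.4}). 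A direct analysis of the defining series of $\tau_\chi(n)$ (or comparison with the main term predicted by Theorem \ref{th.2}, only the non-prime portion of whose proof is unconditional) gives $M(n)=\tfrac12 n\log n+c_\chi n+O(1)$. The proposition therefore reduces to showing $|P_\chi(n)|=o(n\log n)$ uniformly for $n\le T^2$ when $T\ge T_1$.

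The essential input is a partial-RH bound on $\psi(x,\chi)$ in the range $x\le T^2$. Applying the truncated explicit formula
$$
\psi(x,\chi)=-\sum_{|Im(\rho)|\le T}\frac{x^\rho}{\rho}+O\!\left(\frac{x(\log xT)^{2}}{T}\right),
$$
the hypothesis puts every zero in the sum on the critical line $Re(s)=1/2$, so $|x^\rho/\rho|\le\sqrt x/|\rho|$; hence the zero sum is $\ll\sqrt x\log^{2}T$. For $x\le T^2$ the error term is $\ll T(\log xT)^{2}\le\sqrt x\log^{2}x$, so altogether
$$
\psi(x,\chi)\ll\sqrt x\,\log^{2}x\qquad(x\le T^2),
$$
i.e., the full RH bound inside the admissible range.

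I would substitute this estimate into $P_\chi(n)$ by Abel summation, splitting the resulting integral at $x=T^2$. The part $x\le T^2$ is treated with the above bound on $\psi$ and the Koepf--Schmersau inequality $|L^{\alpha}_{n-1}(x)|\le e^{x/2}((n-1+\alpha)/x)^{\alpha/2}$ used in (\ref{eq.33}); the tail $x>T^2$ is handled by the same Laguerre bound together with the trivial $\psi(x,\chi)\ll x$, using the explicit cut-off $M$ from the boxed equation following (\ref{eq.33}). This produces $|P_\chi(n)|\ll\sqrt n\,(\log n)^{c}$ for some absolute $c$, which is dominated by $M(n)\asymp\tfrac12 n\log n$ for all $n\le T^2$ as soon as $T\ge T_1$, giving $\lambda_\chi(n)\ge 0$.

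The main obstacle is the treatment of the Abel tail beyond $x=T^2$, where the partial-RH estimate for $\psi$ is no longer available: one must rely solely on the Koepf--Schmersau bound and the elementary $\psi(x,\chi)\ll x$, and choose the cut-off $M$ just large enough that the Laguerre-weighted tail is $o(n\log n)$ yet small enough ($M\le T^2$) that the explicit-formula estimate still controls the range $x\le M$. Verifying that these constraints can be met simultaneously, with an implicit $T_1$ absorbing all constants, is the delicate part of the argument.
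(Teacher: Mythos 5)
Your route is entirely different from the paper's: the paper never touches the arithmetic formula here, but works directly with the zero sum, splitting $\sum_\rho\left(1-Re\left[(1-1/\rho)^n\right]\right)$ at height $T$, using that each zero on the critical line contributes the non-negative quantity $1-\cos(n\theta_\rho)$ and estimating the contribution of the zeros above height $T$. Your alternative, however, has a gap I do not see how to close.

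The fatal step is the treatment of the tail of the Laguerre-weighted prime sum. The weight $L^1_{n-1}(\log k)$ is a polynomial of degree $n-1$ in $\log k$, so $\frac{\Lambda(k)}{k}\bigl|L^1_{n-1}(\log k)\bigr|$ behaves like $\frac{\Lambda(k)(\log k)^{n-1}}{k\,(n-1)!}$ for large $k$, and $\sum_k\Lambda(k)(\log k)^{n-2}/k$ diverges: the series $P_\chi(n)$ is only conditionally convergent, and no absolute estimate combined with the trivial $\psi(x,\chi)\ll x$ can control its tail. Worse, the Koepf--Schmersau inequality shows the weight is genuinely of size $\sqrt k$ throughout $\log k\le 4n$, i.e.\ the prime sum ``lives'' on $k$ up to $e^{4n}$; for $n\le T^2$ this reaches $k\approx e^{4T^2}$, astronomically beyond the window $x\le T^2$ in which the truncated explicit formula plus the partial RH gives square-root cancellation in $\psi(x,\chi)$. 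The cut-off $M\asymp n\,10^{2\nu}$ you want to borrow from Section \ref{sec.2} does not rescue this, because the estimate \eqref{eq.33} et seq.\ rests on $\bigl|\sum_{p>M}\chi(p)\log p/\sqrt p\bigr|\le 2/\sqrt M$, and the mere convergence of $\sum_p\chi(p)\log p/\sqrt p$ is already equivalent to the full Riemann hypothesis for $L(s,\chi)$ (it is the convergence of $\sum_k\Lambda(k)\chi(k)k^{-s}$ at $s=1/2$); invoking it inside a proof of a partial-RH criterion is circular. There is also a smaller slip inside the range $x\le T^2$: the error term $x\log^2(xT)/T$ of the explicit formula truncated at the fixed height $T$ is not $\ll\sqrt x\log^2x$ when $x$ is much smaller than $T^2$; you would need to truncate at a height depending on $x$ (say $\sqrt x\le T$), which is fixable but shows the estimate as written is not correct. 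The paper's direct zero-splitting avoids all of this because the hypothesis of the proposition is a statement about zeros, not primes, and is used exactly where it applies.
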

\begin{proof}  We have
$$\lambda_{\chi}(n)=\sum_{\rho}\left[1-\left(1-\frac{1}{\rho}\right)^{n}\right].$$
Then
\begin{eqnarray}\label{eq.8}
\lambda_{\chi}(n)&=&\sum_{\rho}\left(1-Re\left[\left(1-\frac{1}{\rho}\right)^{n}\right]\right)\nonumber\\
&=&\sum_{\rho; |Im(\rho)|<T}\left(1-Re\left[\left(1-\frac{1}{\rho}\right)^{n}\right]\right)+\sum_{\rho; T<|Im(\rho)|}\left(1-Re\left[\left(1-\frac{1}{\rho}\right)^{n}\right]\right)\nonumber\\
\end{eqnarray}
Let  $\rho=\beta+i\gamma$, then we obtain
$$1-\left(1-\frac{1}{\rho}\right)^{n}=1-\left(\frac{1+\frac{\beta-1}{i\gamma}}{1+\frac{\beta}{i\gamma}}\right)^{n},\ \frac{1+\frac{\beta-1}{i\gamma}}{1+\frac{\beta}{i\gamma}}=\left(1-\frac{\beta}{\gamma^{2}}\right)+\frac{i}{\gamma}+O\left(\frac{1}{\gamma^{3}}\right)$$
and using binomial identity, we get
$$Re\left[\left(\frac{1+\frac{\beta-1}{i\gamma}}{1+\frac{\beta}{i\gamma}}\right)^{n}\right]=\left(1-\frac{\beta}{\gamma^{2}}\right)^{n}+O\left(\frac{1}{\gamma^{3}}\right),$$
where the $O$-symbol depends on $n$. Therefore
\begin{eqnarray}\label{eq.9}\sum_{ |\gamma|>T}\left(1-Re\left[\left(1-\frac{1}{\rho}\right)^{n}\right]\right)&=&\sum_{ |\gamma|>T}\left[1-\left(1-\frac{\beta}{\gamma^{2}}\right)^{n}+O\left(\frac{1}{\gamma^{3}}\right)\right]\nonumber\\
&=&\sum_{ |\gamma|>T}\frac{n\beta}{\gamma^{2}}+O\left(\frac{1}{\gamma^{3}}\right).
\end{eqnarray}
Then, the second sum in \eqref{eq.8} goes to 0 in absolute value as $T \to \infty$ by conditional convergence (see also Proposition \ref{prop.2} below).
Finally, it suffices to prove that, under the Riemann hypothesis, there exists a positive constant $c_{0}$  such that for large $T$ we have
\begin{equation}\label{eq.11}
\sum_{|Im(\rho)|<T}\left(1-Re\left[\left(1-\frac{1}{\rho}\right)^{n}\right]\right)\geq c_{0} \frac{\log T}{T}.
\end{equation}
Thus, for $1\leq n\leq T^{2}$, we get
\begin{eqnarray}\label{eq.12}\sum_{ |\gamma|<T}\left(1-Re\left[\left(1-\frac{1}{\rho}\right)^{n}\right]\right)&\geq&\sum_{ |\gamma|<T}\frac{n^{2}}{2\gamma^{2}}\nonumber\\
&\geq&\frac{n^{2}}{2}\sum_{ |\gamma|<T}\frac{1}{\gamma^{2}}\nonumber\\
&\geq&\frac{n^{2}}{2T^{2}}\sum_{ |\gamma|<T}1\nonumber\\
&\geq&\frac{1}{2T^{2}}N_{\chi}(T).
\end{eqnarray}
Recall that
$$N_{\chi}(T)=\frac{1}{2\pi}T\log T+c_{1}T+O\left(\log T\right).$$
Then, equation \eqref{eq.11} is proved.
\end{proof}

\noindent{\bf Remark}
\begin{itemize}
	\item Recall that the $10^{13}$ first zeros of the Riemann zeta function lie on the line $Re(s)=1/2$ (see. \cite{4}). Then, from  Proposition \ref{prop.1}, we might expect that the first $10^{26}$ Li coefficients $\lambda_{\zeta}(n)$ are non-negative.

	\item In the section \ref{sec.5}, we will  use the first $10^{4}$ critical zeros of the
Dirichlet $L$-functions to compute the first Li coefficients $\lambda_{\chi}(n)$. Then, from Proposition \ref{prop.1} above, we also might expect   that the first $10^{8}$ Li coefficients are non-negative.
\end{itemize}
\noindent{\bf Conversely.} From the work of Brown \cite{1}, the first observation is that the first "non-trivial" inequality $\lambda_{\chi}(2)\geq0$ is sufficient to establish the non-existence of a Siegel zero for $\xi(s,\chi)$ (see \cite[Corollary 1]{1}). \\
\ \ Let $r>1$ be a real number. By the invariance of the zeros $\rho$ of $\xi(s,\chi)$ under the map $\rho\longmapsto1-\overline{\rho}$,
$$\forall \ \rho,\ \ \left|\frac{\rho}{\rho-1}\right|\leq r\ \Leftrightarrow\ \forall\ \rho,\ \rho\in{D_{r}},$$
where $D_{r}$ is the closed region bounded by the lines $\{z\in{\Bbb C}:\ Re(z)=0,1\}$ and the  arcs of tow circles. The second observation (see \cite[Theorem 3]{1}) is that, for large $N\in{\Bbb N}$, the equality $\lambda_{\chi}(1)\geq0,...,\lambda_{\chi}(n)\geq0$ imply the existence of a certain zero-free region for  $\xi(s,\chi)$, that is, there exist constants $N,\mu, \nu$ depending only on $q$ such that if $\lambda_{\chi}(1)\geq0,...,\lambda_{\chi}(n)\geq0$ hold, and $n\geq N$, then the zeros of $\xi(s,\chi)$ belong to $D_{r}$, where $r=\sqrt{1+T^{-2}}$ and $T=\left(\frac{n}{\mu\log^{2}(\nu n)}\right)^{1/3}$.\\

 Let us define the partial Li coefficients by
$$\lambda_{\chi}(n,T)=\sum_{\rho;\ |Im\rho|\leq T}1-\left(1-\frac{1}{\rho}\right)^{n}$$ with a parameter $T$. An estimate for  the error term $|\lambda_{\chi}(n)-\lambda_{\chi}(n,T)|$ is stated in the following proposition.
\begin{proposition}\label{prop.2}
For sufficiently large $T\geq1$, we have
\begin{equation}\label{eq.12}
\left|\lambda_{\chi}(n)-\lambda_{\chi}(n,T)\right|\leq\frac{3n^{2}}{2T^{2}}\left[\frac{1}{2\pi}T\log T+\left(\frac{1}{\pi}+\log\left(\frac{q}{2\pi e}\right)\right)T+\frac{1}{2}\right].
\end{equation}
\end{proposition}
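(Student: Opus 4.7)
The plan is to bound the tail
\[
\lambda_{\chi}(n)-\lambda_{\chi}(n,T)=\sum_{|Im(\rho)|>T}\left[1-\left(1-\frac{1}{\rho}\right)^{n}\right],
\]
interpreted as the limit of partial sums symmetric in $|Im(\rho)|$. This symmetric cutoff is essential: the naive termwise estimate $|1-(1-1/\rho)^{n}|=O(n/|\gamma|)$ produces a divergent sum, so cancellation between a zero and its pair (its complex conjugate for real $\chi$, or $1-\bar{\rho}$ via the functional equation for $\xi(s,\chi)$) must be exploited.

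The first step is to prove a per-pair estimate of the shape
\[
\left|\left[1-\left(1-\frac{1}{\rho}\right)^{n}\right]+\left[1-\left(1-\frac{1}{\bar{\rho}}\right)^{n}\right]\right|\leq \frac{3n^{2}}{\gamma^{2}},
\]
and its analogue for complex $\chi$. This builds on the computation already carried out in the proof of Proposition~\ref{prop.1}: writing $(1-1/\rho)^{n}=r^{n}e^{in\theta}$ with $r^{2}=1+(1-2\beta)/|\rho|^{2}$ and $\theta=\arg(1-1/\rho)\approx 1/\gamma$, the paired contribution equals $2-2r^{n}\cos(n\theta)$. Using $4\sin^{2}(n\theta/2)\leq (n\theta)^{2}\leq n^{2}/\gamma^{2}$ together with $|1-r^{n}|\leq n/(2\gamma^{2})$ produces the stated per-pair bound with the sharp constant.

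Summing this per-pair bound over zeros with $|\gamma|>T$ yields
\[
|\lambda_{\chi}(n)-\lambda_{\chi}(n,T)|\leq \frac{3n^{2}}{2}\sum_{|\gamma|>T}\frac{1}{\gamma^{2}} .
\]
The tail $\sum_{|\gamma|>T}1/\gamma^{2}$ is then controlled by Abel summation against the counting function. Writing $N_{\chi}^{*}(t)$ for the number of zeros with $|Im(\rho)|\leq t$ and integrating by parts,
\[
\sum_{|\gamma|>T}\frac{1}{\gamma^{2}}=-\frac{N_{\chi}^{*}(T)}{T^{2}}+2\int_{T}^{\infty}\frac{N_{\chi}^{*}(t)}{t^{3}}\,dt .
\]
Inserting the explicit asymptotic $N_{\chi}(T)=\frac{1}{2\pi}T\log T+c_{1}T+O(\log T)$ with $c_{1}=\frac{1}{2\pi}\log(q/(2\pi e))$ recalled in Section~\ref{sec.1} and a fully quantified error term, and carrying out the integration, produces the bracketed expression
\[
\frac{1}{T^{2}}\left[\frac{1}{2\pi}T\log T+\left(\frac{1}{\pi}+\log\frac{q}{2\pi e}\right)T+\frac{1}{2}\right].
\]
Multiplying by $3n^{2}/2$ completes the proof.

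The principal obstacle is the first step: extracting from the symmetric cutoff enough cancellation to replace the naive $n/|\gamma|$ bound by the summable $n^{2}/\gamma^{2}$ bound, with the numerical constant $3/2$ sharp. The argument must also be robust enough to handle complex $\chi$, where complex conjugation does not preserve the zero set of $L(s,\chi)$ and one must instead use the functional-equation pairing $\rho\mapsto 1-\bar{\rho}$. Once that per-pair estimate is in hand, the Abel-summation step is routine, but the error in $N_{\chi}(T)$ must be kept explicit (rather than left as $O(\log T)$) in order to reproduce the coefficient $\frac{1}{\pi}$ of $T$ and the constant $\frac{1}{2}$ in the bracket.
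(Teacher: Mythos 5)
Your overall strategy coincides with the paper's: exploit the symmetric cutoff to pair zeros so that the non-summable $O(n/|\gamma|)$ terms cancel, prove a per-pair bound of order $n^{2}/\gamma^{2}$, and then evaluate $\sum_{|\gamma|>T}\gamma^{-2}$ by Stieltjes integration against the zero-counting function. The paper implements the pairing as $\rho\leftrightarrow 1-\rho$, writing the tail as $\frac{1}{2}Re\sum_{|\gamma|>T}\bigl(2-(\frac{\rho-1}{\rho})^{n}-(\frac{\rho}{\rho-1})^{n}\bigr)$ and expanding binomially, whereas you use conjugation and polar coordinates; both pairings are admissible in principle.

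The difficulty is that your constants do not deliver the stated inequality. Your per-pair bound $3n^{2}/\gamma^{2}$, summed over pairs, gives $\frac{3n^{2}}{2}\sum_{|\gamma|>T}\gamma^{-2}$, and your Abel-summation identity (which is correct as written) evaluates this tail with $N_{\chi}^{*}=2N_{\chi}$, producing approximately $\frac{2}{T^{2}}$ times the bracket of the Proposition --- that bracket corresponds to $\int_{T}^{\infty}x^{-2}\,dN_{\chi}(x)$ with $N_{\chi}$ counting upper half-plane zeros only, so your claim that the integration yields $\frac{1}{T^{2}}$ times the bracket is off by a factor of two. The product is then $\frac{3n^{2}}{T^{2}}$ times the bracket, twice the claimed bound. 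To land on $\frac{3n^{2}}{2T^{2}}$ one needs the per-pair constant $\frac{3n^{2}}{2\gamma^{2}}$. The paper gets exactly this because the $\rho\leftrightarrow1-\rho$ pairing makes the linear term $n\,Re(\frac{1}{\rho}+\frac{1}{1-\rho})=n(\frac{\beta}{|\rho|^{2}}+\frac{1-\beta}{|1-\rho|^{2}})\le\frac{n}{\gamma^{2}}$, the quadratic term at most $\frac{n^{2}-n}{\gamma^{2}}$, and the order-$k\ge3$ terms at most $\frac{n^{3}}{2|\gamma|^{3}}\le\frac{n^{2}}{2\gamma^{2}}$ under the explicit hypothesis $|\gamma|\ge T\ge n$. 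With your conjugate pairing the linear term is $\frac{2n\beta}{|\rho|^{2}}$, which can be as large as $\frac{2n}{\gamma^{2}}$ and cannot be absorbed into $\frac{n^{2}}{2\gamma^{2}}$ for small $n$. Finally, you need (but do not state) the restriction $T\ge n$, without which neither $|1-r^{n}|\le n/(2\gamma^{2})$ nor the control of the higher-order corrections holds; the paper's proof invokes it explicitly and it is what the phrase ``sufficiently large $T$'' must be read as covering.
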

\begin{proof}
Note that $\rho=\beta+i\gamma$, where $\beta$, $\gamma\in{\rb}$ and $0\leq\beta\leq1$. We have
$$\lambda_{\chi}(n)-\lambda_{\chi}(n,T)=\frac{1}{2}Re\left[ \sum_{|\gamma|> T}\left(2-\left(\frac{\rho-1}{\rho}\right)^{n}-\left(\frac{\rho}{\rho-1}\right)^{n}\right)\right].$$
Using a binomial expansion of the inner term in the sum in the right-hand side, we obtain
\begin{eqnarray}\label{eq.13}
Re\left[2-\left(\frac{\rho-1}{\rho}\right)^{n}-\left(\frac{\rho}{\rho-1}\right)^{n}\right]&=&Re\left[n\left(\frac{1}{\rho}+\frac{1}{1-\rho}\right)-\frac{n(n-1)}{2}\left(\frac{1}{\rho^{2}}+\frac{1}{(1-\rho)^{2}}\right)\right]\nonumber\\
&&+\ Re\left[\sum_{k=3}^{n}\left(_{k}^{n}\right)(-1)^{k-1}\left(\rho^{-k}+(1-\rho)^{-k}\right)\right].
\end{eqnarray}
We have
$$\frac{1}{1+\gamma^{2}}\leq Re\left(\frac{1}{\rho}+\frac{1}{1-\rho}\right)\leq\frac{1}{\gamma^{2}}$$
and
$$\frac{1}{1+\gamma^{2}}-\frac{2}{\gamma^{4}}\leq Re\left(\frac{1}{\rho^{2}}+\frac{1}{(1-\rho)^{2}}\right)\leq\frac{2}{\gamma^{2}}.$$
Suppose now that $|\gamma|\geq T\geq n$, then $\frac{(n-3)}{|\gamma|}...\frac{(n-k)}{|\gamma|}\leq1$ for all $n\geq k\geq3$. Then
$$\sum_{k=3}^{n}\left(_{k}^{n}\right)\frac{1}{|\gamma|^{k}}\leq2\frac{n^{3}}{|\gamma|^{3}}\sum_{k=3}^{\infty}\frac{1}{k!}=(2e-5)\frac{n^{3}}{|\gamma|^{3}}\leq\frac{n^{3}}{2|\gamma|^{3}}.$$
Therefore,
\begin{eqnarray}\label{eq.14}
Re\left[2-\left(\frac{\rho-1}{\rho}\right)^{n}-\left(\frac{\rho}{\rho-1}\right)^{n}\right]&\leq&\frac{n}{\gamma^{2}}+\frac{n^{2}-n}{\gamma^{2}}+\sum_{k=3}^{n}\left(_{k}^{n}\right)\frac{2}{|\gamma|^{k}}\nonumber\\
&\leq&\frac{n}{\gamma^{2}}+\frac{n^{3}}{2|\gamma|^{3}}\nonumber\\
&\leq&\frac{3n^{2}}{2|\gamma|^{2}}.
\end{eqnarray}
Then
$$\left|\lambda_{\chi}(n)-\lambda_{\chi}(n,T)\right|\leq\frac{3}{4}n^{2}\sum_{|\gamma|> T}\frac{1}{\gamma^{2}}.$$
We have
\begin{equation}\label{eq.15}
\frac{1}{2}\sum_{|\gamma|>T}\frac{1}{\gamma^{2}}\leq\int_{T}^{\infty}-\frac{d}{dt}\left[t^{-2}\right]_{t=x}\left(N_{\chi}(x)-N_{\chi}(T)\right)dx
=\int_{T}^{\infty}x^{-2}dN_{\chi}(x).
\end{equation}
Furthermore,
\begin{eqnarray}
\int_{T}^{\infty}x^{-2}dN_{\chi}(x)&=&\int_{T}^{\infty}x^{-2}\left[\frac{1}{2\pi}\log x+\frac{1}{2\pi}+\log\left(\frac{q}{2\pi e}\right)+\frac{1}{x}\right]dx\nonumber\\
&=&T^{-2}\left[\frac{1}{2\pi}\log T+\frac{1}{2\pi}T\log T+\frac{1}{2\pi}T+\log\left(\frac{q}{2\pi e}\right)T+\frac{1}{2}\right].\nonumber
\end{eqnarray}
Finally, we get
$$\left|\lambda_{\chi}(n)-\lambda_{\chi}(n,T)\right|\leq\frac{3}{2}\frac{n^{2}}{T^{2}}\left[\frac{1}{2\pi}T\log T+\left(\frac{1}{\pi}+\log\left(\frac{q}{2\pi e}\right)\right)T+\frac{1}{2}\right]$$
and  Proposition \ref{prop.2} follows.
\end{proof}
For our computations  at the end of this paper, we need to find $T_{0}$ such that $\left|\lambda_{\chi}(n)-\lambda_{\chi}(n,T)\right|\leq 10^{-k}$. To do so, it suffices to find $T_{0}$ such that
$$\frac{3n^{2}}{4\pi}\frac{\log T}{T}\leq \frac{10^{-k}}{3}\ \ \ \Leftrightarrow \ \ \ \frac{\log T}{T}\leq \frac{4\pi 10^{-k}}{9n^{2}}.$$
Using the theory of  the Lambert $W$ function, we get
$$T_{0}=-\frac{9n^{2}}{4\pi}W_{-1}\left(-\frac{4\pi}{9n^{2}}10^{-k}\right),$$
where $W_{-1}$ denotes the branch satisfying $W(x)\leq-1$ and $W(x)$ is the Lambert $W$ function which is defined to be the multivalued inverse of the function $w\longmapsto we^{w}$.

 \end{section}
\begin{section}{New formulas for the Li coefficients}\label{sec.4}
In this section, we give  new formulas for the Li coefficients under the Riemann hypothesis (integral and summation formula) which will be used to compute and verify the positivity of the Li coefficients $\lambda_{\chi}(n)$ under the Riemann hypothesis.\\

\noindent From (\ref{eq.1}) we have
\begin{equation}\label{eq.16}
\log\xi\left(\frac{z}{z-1},\chi\right)=\log\xi\left(\frac{1}{1-z},\chi\right)=\log \xi(0,\chi)+ \sum_{n=1}^{\infty}\lambda_{\chi}(n)\frac{z^{n}}{n}.
\end{equation}
The number $\lambda_{\chi}(n)$ does not depend on the choice of the logarithm. Rewrite  (\ref{eq.16}) at the point $z=-1$. Note that the region of convergence for this is an open disk of radius 2 centered at $z=-1$ and it encloses in particular the entirety of the closed unit disk, except for the point $z=1$ that is a pole of $\xi(\frac{1}{1-z},\chi)$. \\

Assume that the Riemann hypothesis
holds. Then, we have
\begin{eqnarray}\label{eq.17}
\log\xi\left(\frac{1}{1-z},\chi\right)&=&\log \xi(0,\chi)+ \sum_{n=1}^{\infty}\lambda_{\chi}(n)\frac{z^{n}}{n}\nonumber\\
&=&C_{\chi}(0)+\sum_{n=1}^{\infty}C_{\chi}(n)(z+1)^{n}.
\end{eqnarray}
Expanding $(z+1)^{n}$, we obtain
\begin{equation}\label{eq.18}
\lambda_{\chi}(n)=n\sum_{j=1}^{\infty}\left(_{n}^{j}\right)C_{\chi}(j).
\end{equation}
We have
\begin{equation}\label{eq.19}
N_{\chi}(T)=\frac{1}{\pi}Im\left(\log\xi_{\chi}\left(\frac{1}{2}+iT\right)\right)=\sum_{n=1}^{\infty}\frac{C_{\chi}(n)}{\pi}\left(Im\left(\frac{2\gamma+i}{2\gamma-i}+1\right)\right)^{n},
\end{equation}
where we have used the substitution $\frac{1}{2}+i\gamma=\frac{1}{1-z}$ or $z=\frac{2\gamma+i}{2\gamma-i}$. Then, since
$$
\left(Im\left(\frac{2\gamma+i}{2\gamma-i}+1\right)\right)^{n}=\frac{(4\gamma)^{n}}{(4\gamma^{2}+1)^{n/2}}\sin\left(n\tan^{-1}\frac{1}{2\gamma}\right)=2^{n}\cos^{n}\theta\sin(n\theta),$$
$$ \cos\theta:=\frac{2\gamma}{\sqrt{4\gamma^{2}+1}},
$$
we get
$$\pi  N_{\chi}(\gamma)=\sum_{n=1}^{\infty}C_{\chi}(n)2^{n}\cos^{n}\theta\sin(n\theta).$$
Using the identity
$$\int_{0}^{\pi/2}\cos^{n}\theta\sin(n\theta)\sin(2m\theta)d\theta=\frac{\pi}{2^{n+2}}\left(_{m}^{n}\right),\ \ m, n\in{\nb},$$
we deduce
$$\int_{0}^{\pi/2}\pi N_{\chi}(\gamma)\sin(2m\theta)d\theta=\sum_{n=1}^{\infty}C_{\chi}(n)\frac{\pi}{4}\left(_{m}^{n}\right).$$
Hence,
$$\sum_{n=1}^{\infty}C_{\chi}(n)\left(_{m}^{n}\right)=4\int_{0}^{\pi/2}N_{\chi}(\gamma)U_{m-1}(\cos(2\theta))\sin(2\theta)d\theta,$$
where $U_{m-1}$ are the Chebyschev polynomial of the second kind. Using that $$U_{m-1}(\cos\theta):=\frac{\sin(m\theta)}{\sin(\theta)},\ \ \ \cos(2\theta)=\cos^{2}\theta-\sin^{2}\theta=\frac{4\gamma^{2}-1}{4\gamma^{2}+1},$$ $$\sin(2\theta)d\theta=-2\cos\theta d(\cos\theta)$$ and that as $\gamma$ proceeds from 0 to $\infty$, $\theta$ subtends an angle from $\pi/2$ to $0$, we obtain
$$\sum_{n=1}^{\infty}C_{\chi}(n)\left(_{m}^{n}\right)=8\int_{0}^{\infty}N_{\chi}(\gamma)U_{m-1}\left(\frac{4\gamma^{2}-1}{4\gamma^{2}+1}\right)\times\frac{2\gamma}{\sqrt{4\gamma^{2}+1}}\times\frac{2}{(4\gamma^{2}+1)^{3/2}}d\gamma.$$
\noindent Therefore, from (\ref{eq.18}) we get for all $n\in{\nb}$ the following proposition.
\begin{proposition}\label{prop.3} Under the Riemann hypothesis, for $n\geq1$, we have
\begin{equation}\label{eq.20}
\lambda_{\chi}(n)=32\ n\ \int_{0}^{\infty}\frac{\gamma}{(4\gamma^{2}+1)^{2}}N_{\chi}(\gamma)U_{n-1}\left(\frac{4\gamma^{2}-1}{4\gamma^{2}+1}\right)d\gamma.
\end{equation}
\end{proposition}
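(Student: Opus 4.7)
The plan is to assemble the chain of computations already laid out between equations (16) and (20). The proposition follows by combining two ingredients. The first is an algebraic identity between the Li coefficients and the Taylor coefficients $C_\chi(n)$ coming from the expansion around $z=-1$: from (17), expanding $(z+1)^j$ by the binomial theorem and comparing the resulting power series in $z$ with the Li expansion on the left of (17) yields
$$\lambda_{\chi}(n)=n\sum_{j\geq n}\binom{j}{n}C_{\chi}(j),$$
which is (18). The second ingredient is a Fourier-type extraction of the weights $\binom{j}{n}$ against $N_\chi$: starting from $N_\chi(T)=\frac1\pi\,\mathrm{Im}\log\xi_\chi(\tfrac12+iT)$, substituting $\tfrac12+i\gamma=1/(1-z)$, and reading off imaginary parts gives $\pi N_\chi(\gamma)=\sum_{n}C_\chi(n)\,2^n\cos^n\theta\sin(n\theta)$ with $\cos\theta=2\gamma/\sqrt{4\gamma^2+1}$.

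Next I would multiply by $\sin(2m\theta)$ and integrate over $\theta\in[0,\pi/2]$. The identity $\int_0^{\pi/2}\cos^n\theta\sin(n\theta)\sin(2m\theta)\,d\theta=\pi/2^{n+2}\binom{n}{m}$ collapses the sum to $\sum_n C_\chi(n)\binom{n}{m}$. Recognizing $\sin(2m\theta)/\sin(2\theta)=U_{m-1}(\cos 2\theta)$, rewriting $\cos(2\theta)=(4\gamma^2-1)/(4\gamma^2+1)$, and tracking the Jacobian $\sin(2\theta)\,d\theta=-2\cos\theta\,d(\cos\theta)=-(8\gamma/(4\gamma^2+1)^2)\,d\gamma$ (with $\theta$ running from $\pi/2$ down to $0$ as $\gamma$ runs from $0$ to $\infty$) gives
$$\sum_{j\geq 1}C_\chi(j)\binom{j}{m}=32\int_0^\infty \frac{\gamma\,N_\chi(\gamma)}{(4\gamma^2+1)^2}\,U_{m-1}\!\left(\frac{4\gamma^2-1}{4\gamma^2+1}\right)d\gamma.$$
Specializing $m=n$ and inserting into (18) produces (20), which is exactly the claim.

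The role of the Riemann hypothesis enters in justifying the re-expansion (17): under RH, every non-trivial zero $\rho$ maps to a point on the unit circle under $\rho\mapsto z=(2\rho-1-i)/(2\rho-1+i)$, so the only singularity of $\xi_\chi(1/(1-z))$ inside the disk $|z+1|<2$ is at $z=1$, which is exactly the distance-$2$ boundary point. This guarantees that the Taylor series $\sum C_\chi(n)(z+1)^n$ converges on the open unit disk, and in particular that the boundary series for $\pi N_\chi(\gamma)$ makes sense as an analytic limit.

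The main obstacle I anticipate is the rigorous justification of the termwise interchange of summation and integration when extracting $\binom{n}{m}$ via the orthogonality identity, since the series $\sum_n C_\chi(n)2^n\cos^n\theta\sin(n\theta)$ converges only on the open interval $0<\theta<\pi/2$ and potentially only conditionally near the endpoint $\theta=0$ (the point $z=1$). Controlling the tail — either by truncating and using the explicit asymptotics of $N_\chi$ from the Riemann–von Mangoldt formula, or by viewing the interchange as an Abel summation limit as $\theta\to 0^+$ — is the delicate step. The accompanying Jacobian bookkeeping and the identification of the Chebyshev polynomial are routine by comparison.
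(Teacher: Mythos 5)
Your argument is exactly the paper's derivation: expand $\log\xi_\chi(1/(1-z))$ about $z=-1$ to get $\lambda_\chi(n)=n\sum_j\binom{j}{n}C_\chi(j)$, extract $\sum_j C_\chi(j)\binom{j}{n}$ from the boundary series for $\pi N_\chi$ via the orthogonality integral $\int_0^{\pi/2}\cos^j\theta\sin(j\theta)\sin(2n\theta)\,d\theta=\pi 2^{-j-2}\binom{j}{n}$, identify the Chebyshev polynomial, and change variables back to $\gamma$ with the same Jacobian. Your closing remarks on where RH enters and on justifying the interchange of sum and integral address a point the paper passes over in silence, but the route is the same.
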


Next, we give another formula for the Li coefficient. Recall that the function $N_{\chi}(T)$ is a real step function, increasing by unity each time a new critical zero is counted:
\begin{equation}\label{eq.21}N_{\chi}(T)=\sum_{\rho, Im(\rho)>0}\phi(T-Im(\rho))=\sum_{k=1}^{\infty}\alpha_{k}\phi(T-\gamma_{k}),
\end{equation}
where $\rho_{j}=\beta_{k}+i\gamma_{k},\ \gamma_{k}>0$ and $\phi(x-a)=1$ if $x\geq a$ and 0 if $x<a$. The zeros are ordered so that $\gamma_{k+1}>\gamma_{k}$ and the $\alpha_{k}$ counts the number of zeros with imaginary part $\gamma_{k}$ including the multiplicities.
Simplification of the integral formula (\ref{eq.20}) is stated in the following proposition.
\begin{proposition}\label{prop.4} Under the Riemann hypothesis, we have
$$\lambda_{\chi}(n)=2\sum_{k=1}^{\infty}\alpha_{k}\left(1-T_{n}\left(\frac{4\gamma_{k}^{2}-1}{4\gamma_{k}^{2}+1}\right)\right),\ \ n\in{\nb}.$$
\end{proposition}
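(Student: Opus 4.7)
The plan is to start from the integral representation of Proposition \ref{prop.3} and simply substitute the explicit step-function description of $N_\chi(\gamma)$ given in \eqref{eq.21}. After interchanging the sum and integral, each term reduces to a clean integral of a Chebyshev polynomial of the second kind, which integrates to a Chebyshev polynomial of the first kind, evaluated at a simple change-of-variable point.

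Concretely, set $u(\gamma)=\frac{4\gamma^{2}-1}{4\gamma^{2}+1}=1-\frac{2}{4\gamma^{2}+1}$. A direct computation gives $du=\frac{16\gamma}{(4\gamma^{2}+1)^{2}}\,d\gamma$, so the integrand in Proposition \ref{prop.3} (apart from the factor $N_\chi(\gamma)$) can be written as $\frac{d}{d\gamma}\bigl[2T_{n}(u(\gamma))\bigr]$, using the classical identity $T_{n}'(u)=nU_{n-1}(u)$. Denote $G(\gamma):=2T_{n}(u(\gamma))$. Substituting \eqref{eq.21} and integrating termwise yields
\begin{equation*}
\lambda_{\chi}(n)=\int_{0}^{\infty}G'(\gamma)\,N_{\chi}(\gamma)\,d\gamma=\sum_{k=1}^{\infty}\alpha_{k}\int_{\gamma_{k}}^{\infty}G'(\gamma)\,d\gamma=\sum_{k=1}^{\infty}\alpha_{k}\bigl(G(\infty)-G(\gamma_{k})\bigr).
\end{equation*}
Since $u(\gamma)\to 1$ as $\gamma\to\infty$ and $T_{n}(1)=1$, one has $G(\infty)=2$. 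Substituting $G(\gamma_{k})=2T_{n}\!\left(\frac{4\gamma_{k}^{2}-1}{4\gamma_{k}^{2}+1}\right)$ gives the claimed identity.

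The only nontrivial point is justifying the termwise integration. Using $|U_{n-1}(u)|\leq n$ on $[-1,1]$, the integrand is bounded by $\frac{32n^{2}\gamma}{(4\gamma^{2}+1)^{2}}$, which is absolutely integrable; combined with the bound $N_{\chi}(\gamma)=O(\gamma\log\gamma)$, Fubini--Tonelli (applied to the absolutely convergent double integral/sum obtained after writing $N_\chi$ via \eqref{eq.21}) validates the interchange. Equivalently, one can view the computation as integration by parts in the Stieltjes sense, using $(G(\gamma)-2)N_{\chi}(\gamma)\to 0$ as $\gamma\to\infty$ (since $G(\gamma)-2=O(n^{2}/\gamma^{2})$ by the behaviour $T_{n}(1-\varepsilon)=1-\tfrac{n^{2}}{2}\varepsilon+O(\varepsilon^{2})$) and $N_{\chi}(0)=0$; the boundary terms vanish and one obtains the same sum. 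This convergence check, while not technically difficult, is the one step that requires genuine attention; everything else is a one-line substitution.
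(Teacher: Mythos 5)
Your proposal is correct and follows essentially the same route as the paper: substitute the step-function expansion \eqref{eq.21} into the integral formula of Proposition \ref{prop.3}, integrate termwise, and use the antiderivative relation between $U_{n-1}$ and $T_{n}$ (your $T_{n}'=nU_{n-1}$ is the same identity the paper invokes as $\int U_{n}=\frac{1}{n+1}T_{n+1}$). Your explicit justification of the sum--integral interchange is a welcome addition that the paper omits, but it does not change the argument.
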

\begin{proof}By (\ref{eq.21}), the formula (\ref{eq.20}) can be written as follows:
\begin{eqnarray}
  \lambda_{\chi}(n)&=&32n\sum_{k=1}^{\infty}\alpha_{k}\int_{0}^{\infty}\phi(\gamma-\gamma_{k})\frac{\gamma}{(4\gamma^{2}+1)^{2}}U_{n-1}\left(\frac{4\gamma^{2}-1}{4\gamma^{2}+1}\right)d\gamma\nonumber\\
&=&2n\sum_{k=1}^{\infty}\alpha_{k}\int_{\gamma_{k}}^{\infty}\frac{16\gamma}{(4\gamma^{2}+1)^{2}}U_{n-1}\left(\frac{4\gamma^{2}-1}{4\gamma^{2}+1}\right)d\gamma\nonumber\\
&=&2n\sum_{k=1}^{\infty}\alpha_{k}\left[\frac{1}{n}T_{n}(y)\right]_{\frac{4\gamma_{k}^{2}-1}{4\gamma_{k}^{2}+1}}^{1}\nonumber\\
&=&2\sum_{k=1}^{\infty}\alpha_{k}\left(1-T_{n}\left(\frac{4\gamma_{k}^{2}-1}{4\gamma_{k}^{2}+1}\right)\right),\nonumber
\end{eqnarray}
using the following relation between the Chebyshev polynomials of the second kind and the first kind
$$\int U_{n}(x)dx=\frac{1}{n+1}T_{n+1}(x).$$
\end{proof}
This is remarkable summation expression for the Li coefficients. We numerically evaluate
some of the first terms by the right hand side expression and find them to be indeed close to the required values of the Li coefficients. This is reassuring, and the results are presented in the tables below.\\

Under the Riemann hypothesis, from  the above arguments used in the proof of Propositions \ref{prop.3} and \ref{prop.4}, one can derive the following formula
$$\lambda_{\chi}(n,T)=2\sum_{k=1}^{N}\alpha_{k}\left(1-T_{n}\left(\frac{4\gamma_{k}^{2}-1}{4\gamma_{k}^{2}+1}\right)\right),$$
where $N=[N_{\chi}(T)]$ with $[x]=x-\{x\}$ and $\{x\}$ denotes the fractional part of $x$ (the last formula will be denoted $\lambda_{\chi}(n,N)$). Therefore, the latter formula allows one to estimate the error term $|\lambda_{\chi}(n)-\lambda_{\chi}(n,N)|$ in Proposition \ref{prop.4} by evaluating directly the partial Li coefficients as in Proposition \ref{prop.2}.

\end{section}
\begin{section}{Numerical computations}\label{sec.5}

In this section,  we compute and verify the positivity of the  values of $\lambda_{\chi}(n)$ unconditionally or under the Riemann hypothesis. We first compute unconditionally (without assuming the Riemann hypothesis) $\tilde{\lambda}_{\chi}(n,M)$ by using equation (\ref{eq.3}) and computing prime numbers up to $M$ (see. Section \ref{sec.2}). We also compute under the Riemann hypothesis \begin{equation}\label{eq.22}\lambda_{\chi}(n,N)=2\sum_{k=1}^{N}\alpha_{k}\left(1-T_{n}\left(\frac{4\gamma_{k}^{2}-1}{4\gamma_{k}^{2}+1}\right)\right),\ \hbox{with}\  N=10^{4}.
 \end{equation}
 Furthermore, we carried out the calculations for several examples of characters. Some illustrative examples are cited below. We restricted the  tables below for $n\leq 40$. However, one  can find the other values of $n>40$ represented in the  graphs 1-4. \\

 \noindent {\bf Remark.} In fact, by the summation formula (\ref{eq.22}), we could compute more coefficients $\lambda_{\chi}(n)$ with less time consuming way than by the arithmetic formula (\ref{eq.3}), where computation of the first 50 coefficients lasted more than a week.\\

 \noindent Based on the  tables below, we conjecture the following result.\\
{\bf Conjecture. The coefficients $\lambda_{\chi}(n)$ are positive and increasing in $n$.}\\

\noindent This conjecture was partially numerical verified for the case of the Riemann zeta function (see \cite[Appendix D]{2} and \cite{8}) and by the authors in a work in progress for the Hecke $L$-functions \cite{13}.
 {\tiny \begin{center}
\begin{tabular}{ | l || c | c |c ||c||c|c|c|}
  \hline
$\chi$(mod3)\\
    $n$ & \ $\tilde{\lambda}_{\chi}(n,M)\  $ &\ $\lambda_{\chi}(n,N)$\ &\  \ $n$\ \  &\  $\tilde{\lambda}_{\chi}(n,M)$\  &\ $\lambda_{\chi}(n,N)$\ \\ \hline
    1  & 0.05316& 0.056442 &19 & 17.18050  & 17.16170     \\ \hline
  2  & 0.22763 & 0.22542   &20 & 18.58480  & 18.69100   \\ \hline
  3  & 0.14844  & 0.50592&21 & 20.01400  & 20.24310       \\ \hline
  4  & 0.89344   & 0.89624&22 & 21.46700   & 21.81300        \\ \hline
  5  & 1.35725  & 1.39404&23 & 22.94280  & 23.39600       \\ \hline
  6  & 2.12951  & 1.99635&24 & 24.44030  & 24.98820         \\ \hline
  7  & 2.98573  & 2.69962&25 & 25.95870  & 26.58590       \\ \hline
  8  & 3.91334  & 3.49978& 26 & 27.49700  & 28.18600        \\ \hline
  9  & 4.40970    & 4.39225&27 & 29.05460  & 29.78580        \\ \hline
  10 & 5.94841  & 5.37202&28 & 30.63070  & 31.38330     \\ \hline
  11 & 7.04344   & 6.43371&29 & 32.22460  &  32.97700        \\ \hline
  12 & 8.18382   & 7.57163&30 & 33.83580  & 34.56580        \\ \hline
  13 & 9.36580   & 8.77987&31 & 35.46370   & 36.14940        \\ \hline
  14 & 10.58620  & 10.05230 &32 & 37.10770  & 37.72780       \\ \hline
  15 & 11.84230  & 11.38280&33 & 38.76730  & 39.3014  \\ \hline
  16 & 12.81150  & 12.76510&34 & 40.44210  & 40.87120           \\ \hline
  17 & 14.45250  & 14.19300&35& 42.13150  & 42.43870     \\ \hline
  18 & 15.80260  & 15.66050&36&43.83530 &44.00550 \\ \hline

      \end{tabular}

\end{center}}

\begin{center}
\begin{figure}
\centering
\includegraphics[height=8cm]{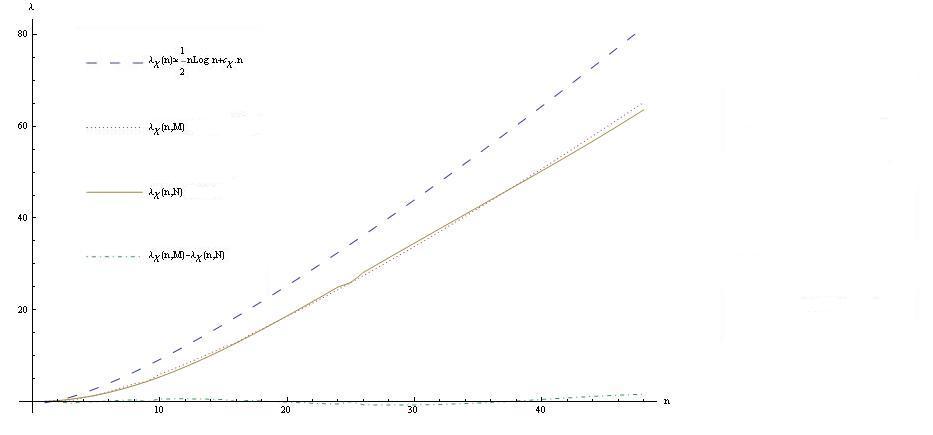}
\caption{ Case of $\chi$ (mod3)}
\label{fig:1}
\end{figure}
 {\tiny \begin{center}
\begin{tabular}{ | l || c | c |c ||c||c|c|c|}
\hline
  $\chi$(mod5)\\
   $n$ & \ $\tilde{\lambda}_{\chi}(n,M)\  $&\ $\lambda_{\chi}(n,N)$\ &\  \ $n$\ \  &\  $\tilde{\lambda}_{\chi}(n,M)$\   &\ $\lambda_{\chi}(n,N)$\ \\ \hline
  1  & 0.13183 & 0.08562  &21 & 25.37770   & 26.21450    \\ \hline
  2  & 0.29872 & 0.34152 &22 & 27.08610   & 27.92160    \\ \hline
  3  & 0.91468 & 0.76482  &23 & 28.81730   & 29.60960      \\ \hline
  4  & 1.58476   & 1.35081    &24 & 30.57020  & 31.27780     \\ \hline
  5  & 2.63432   & 2.09300  &25 & 32.34400    & 32.92720   \\ \hline
  6  & 3.66199  & 2.98332   &26 & 34.13770  & 34.56020   \\ \hline
  7  & 4.77362 & 4.01225   &27 & 35.95070  & 36.18030   \\ \hline
  8  & 5.95664   & 5.16902   &28 & 37.78220   & 37.79200       \\ \hline
  9  & 7.06010   & 6.44188    &29 &39.63160    &39.40090     \\ \hline
  10 & 8.50254  & 7.81828  &30 &41.49820  & 41.01320    \\ \hline
  11 & 9.85298   & 9.28519   &31&43.38150   &42.63540 \\ \hline
  12 & 11.24880   & 10.82930   &32 & 45.28090  & 44.2746    \\ \hline
  13 & 12.68620   & 12.43740    &33 & 47.19590   & 45.93760    \\ \hline
  14 & 14.16200   & 14.09650   &34& 49.12610&47.63100 \\ \hline
  15 & 15.67350  & 15.79410 &35&51.07100&49.36130\\ \hline
  16 & 17.68370   & 17.51860         &36& 53.03020&51.13410\\ \hline
  17 & 14.45250   & 19.25930 &37&55.00320&52.95430\\ \hline
  18 & 20.40000      & 21.00670&38&56.98980&54.82600\\ \hline
  19 & 22.03340   & 22.75260   &39&58.98950&56.75210\\ \hline
  20 & 23.69300   & 24.49030  &40&61.00210&58.73450\\ \hline

\end{tabular}
\end{center}}
\begin{figure}
\centering
\includegraphics[height=7cm]{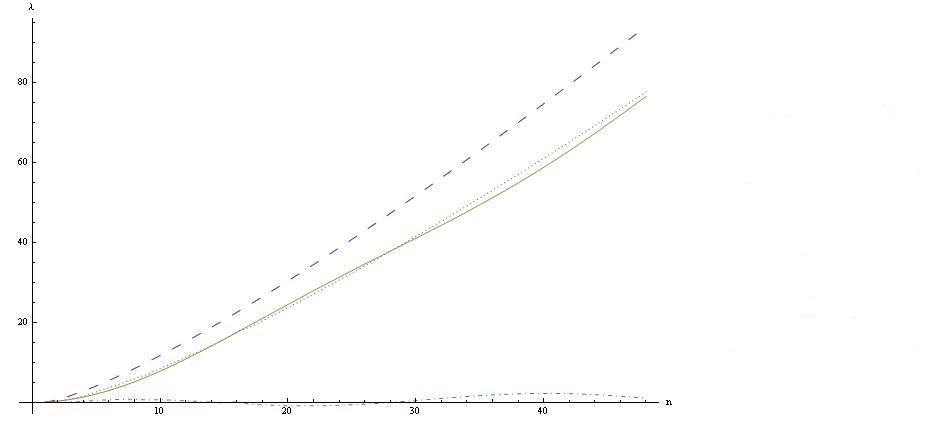}
\caption{Case of $\chi$ (mod5)}
\label{fig:1}
\end{figure}
 {\tiny \begin{center}
\begin{tabular}{ | l || c | c |c ||c||c|c|c|}
  \hline
$\chi$(mod20)\\
    $n$ & \ $\tilde{\lambda}_{\chi}(n,M)$\   &\ $\lambda_{\chi}(n,N)$\ &\  \ $n$\ \  &\  $\tilde{\lambda}_{\chi}(n,M)$\  &\ $\lambda_{\chi}(n,N)$\ \\ \hline
  1  & 0.695021 & 0.319128  &21 & 39.93370  & 41.70260 \\ \hline
  2  & 1.68502   & 1.24419   &22 & 42.33530  & 44.31350 \\ \hline
  3  & 2.99412   & 2.68343   &23 & 44.75970   & 46.59570\\ \hline
  4  & 4.48123   & 4.50032     &24 & 47.20580   & 48.55720 \\ \hline
  5  & 6.10005   & 6.53527  &25 & 49.67270  & 50.26430 \\ \hline
  6  & 7.82087   & 8.63067  &26 & 52.15960   & 51.83150 \\ \hline
  7  & 9.62565   & 10.65500   &27 & 54.66570  & 53.403100 \\ \hline
  8  & 11.50180   & 12.52230  &28 &57.19040   &55.12930  \\ \hline
  9  & 13.32220  & 14.20280    &29&59.73290&57.14130 \\ \hline
  10 & 15.43400  & 15.72450 &30&62.29260&59.52940 \\ \hline
  11 & 17.47760  & 17.16450 &31&64.86910&62.32740 \\ \hline
  12 & 19.56650  & 18.63130 &32&67.46160&65.50710  \\ \hline
  13 & 21.69710  & 20.24300 &33&70.06980&68.98220 \\ \hline
  14 & 23.86610   & 22.10320 &34&72.69310   &72.62260  \\ \hline
  15 & 26.07070  & 24.28030&35&75.33110&76.27560\\ \hline
  16 & 28.54690  & 26.79300&36&77.98350&79.79060\\ \hline
  17 & 30.57800    & 29.60520&37&80.64970&83.04340\\ \hline
  18 & 32.87670   & 32.63050&38&83.32940&85.95580\\ \hline
  19 & 35.20320   & 35.74610&39&86.02230 &88.50750\\ \hline
  20 & 37.55600   & 38.81360&40&88.72800&90.73760 \\ \hline

      \end{tabular}
\end{center}}
\begin{figure}
\centering
\includegraphics[height=7cm]{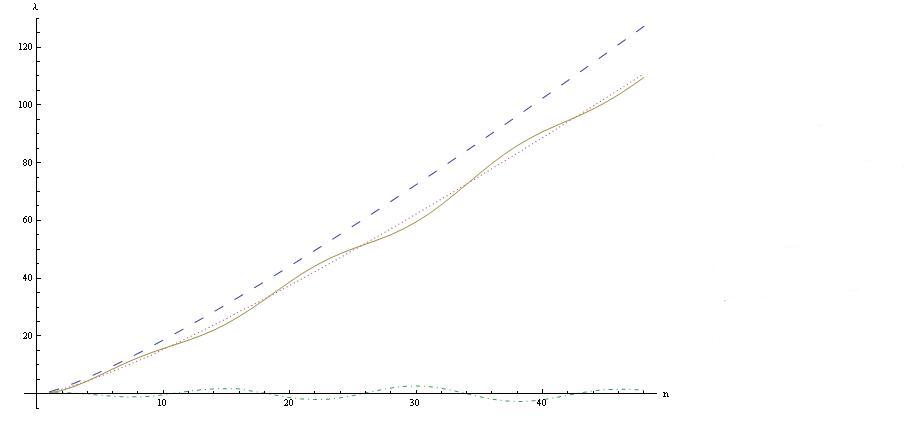}
\caption{Case of $\chi$ (mod20)}
\label{fig:1}
\end{figure}
 {\tiny \begin{center}
\begin{tabular}{ | l || c | c |c ||c||c|c|c|}
\hline
  $\chi$(mod60)\\
   $n$ & \ $\tilde{\lambda}_{\chi}(n,M)\  $ &\ $\lambda_{\chi}(n,N)$\ &\  \ $n$\ \  &\  $\tilde{\lambda}_{\chi}(n,M)$\  &\ $\lambda_{\chi}(n,N)$\ \\ \hline
  1  & 1.12226 &0.48626 &21 & 51.46920 &50.88960  \\ \hline
  2  & 2.78363 &1.86950    &22 & 54.42010 &52.52830  \\ \hline
  3  & 4.64204 &3.94169    &23 & 57.39370 &54.44350  \\ \hline
  4  & 6.83662 &6.41363     &24 & 60.38910  &56.86290  \\ \hline
  5  & 8.84658 &8.98530     &25 & 63.40530 &59.89590 \\ \hline
  6  & 11.11670 &11.41720     &26 & 66.44150 &63.50750  \\ \hline
  7  & 13.47080 &13.58380    &27 & 69.49700 &67.53000 \\ \hline
  8  & 15.89630&15.49640  &28 & 72.57090 &71.70750  \\ \hline
  9  & 18.06830&17.28820   &29 & 75.6628&75.7637  \\ \hline
  10 & 20.92710 &19.16770  &30 & 78.77180&79.47310  \\ \hline
  11 & 23.52000   &21.35250 &31 & 81.89750 &82.71770  \\ \hline
  12 & 26.15820 &24.00100   &32 & 85.03940 &85.51520   \\ \hline
  13 & 28.83810 &27.16160    &33 & 88.19690 &88.00960  \\ \hline
  14 & 31.55630 &30.75170    &34 & 91.36950&90.42920  \\ \hline
  15 & 34.31030 &34.57380&35 & 94.55690 &93.02160 \\ \hline
    16 & 37.56690  &38.36300& 36 & 97.75850&95.98430 \\ \hline
  17 & 39.91620 &41.85530&37 & 100.97400 &99.40850 \\ \hline
  18 & 42.76420 &44.85610&38 & 104.20300 &103.25300 \\ \hline
  19 & 45.64000   &47.29300&39 & 107.44500 &107.35200 \\ \hline
  20 & 48.54210 &49.23760&40 & 110.70000   &111.46000 \\ \hline
     \end{tabular}
\end{center}}
\begin{figure}
\centering
\includegraphics[height=7cm]{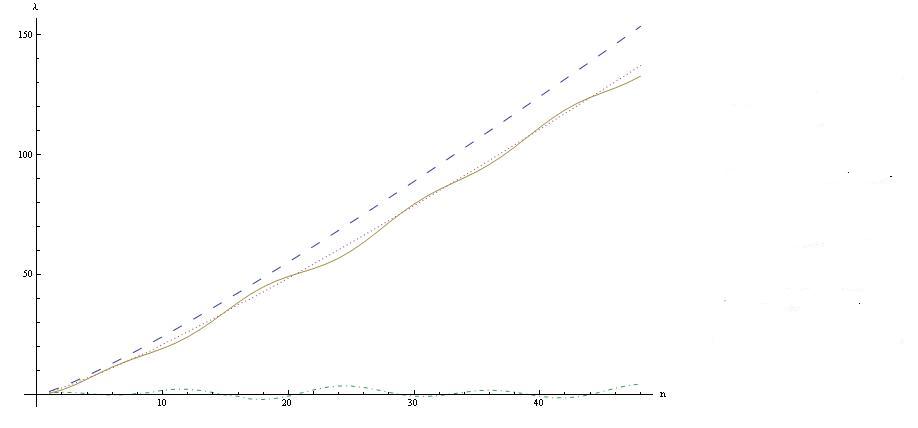}
\caption{Case of $\chi$ (mod60)}
\label{fig:1}
\end{figure}
\end{center}
{\bf Acknowledgements}.  The authors would like to thank   Maciej Radziejewski and Mark Coffey  for their  many valuable comments about the
published paper.

\end{section}




\end{document}